\NewDocumentCommand{\entropy}{om}{\mathbb{H}\left[#2
    \IfValueT{#1}{\,\middle|\,#1}\right]}
\NewDocumentCommand{\bentropy}{lm}
  {\widetilde{\mathbb{H}}#1\left[#2\right]}
\NewDocumentCommand{\mutualInfo}{omm}{\mathbb{I}\left[#2;#3
    \IfValueT{#1}{\,\middle|\,#1}\right]}
\newtheorem{proposition}{Proposition}
\newtheorem{lemma}{Lemma}
\newlist{enumerate*}{enumerate*}{1}
\setlist[enumerate*]{label=(\arabic*)}
\newcommand{\ben}{\begin{eqnarray}}
\newcommand{\een}{\end{eqnarray}}
\title{Sequential detection of low-rank changes \\using extreme eigenvalues}
\author{\IEEEauthorblockN{Liyan Xie and Yao Xie}
\IEEEauthorblockA{H. Milton Stewart School of Industrial and Systems Engineering, Georgia Institute of Technology\\
\{lxie49, yao.xie\}@gatech.edu}
}
\begin{document}
\maketitle

\begin{abstract}
We study the problem of detecting an abrupt change to the signal covariance matrix. In particular, the covariance changes from a ``white'' identity matrix to an unknown spiked or low-rank matrix. Two sequential change-point detection procedures are presented, based on the largest and the smallest eigenvalues of the sample covariance matrix. To control false-alarm-rate, we present an accurate theoretical approximation to the average-run-length (ARL) and expected detection delay (EDD) of the detection, leveraging the extreme eigenvalue distributions from random matrix theory and by capturing a non-negligible temporal correlation in the sequence of scan statistics due to the sliding window approach. Real data examples demonstrate the good performance of our method for detecting behavior change of a swarm.
\end{abstract}

\vspace{-0.05in}
\section{Introduction}
\vspace{-0.05in}

Detecting change-points from high-dimensional streaming data is a fundamental problem in various applications such as video surveillance, sensor networks, and biological swarm behavior study. In various scenarios, the change happens to the covariance. For instance, before the change, the signal covariance matrix is an identity matrix, and after the change, the signal covariance matrix becomes a spike low-rank matrix \cite{berthet2013optimal}, or becomes a low-rank matrix due to the emergence of a subspace structure \cite{xie2016sequential}.  

To detect such changes, it is natural to consider extreme (largest or smallest) eigenvalues of the sample covariance matrices over a time sliding window. Our prior work  \cite{xie2016sequential} presents an initial study of such method using the largest eigenvalue. In this paper, we present a more thorough theoretical analysis of these procedures with largest and smallest eigenvalues. In controlling the false alarm rate, i.e., deriving the average run length (ARL), we take a different approach than \cite{xie2016sequential}  by leveraging random matrix theory. In particular, we use the Tracy-Widom law for extreme eigenvalues to derive an ARL approximation. We further refined the precision our ARL approximation by capturing the temporal dependence of the detection statistic which is an inherent problem that needs to be addressed for change-point detection procedure due to its sliding window approach. Our technique is based on change-of-measure for  Gaussian random field developed in \cite{siegmund2010tail}. Real data examples demonstrate the good performance of our algorithm.

The rest of this paper is organized as follows. Section II sets up the formalism for low-rank change detection. Section III presents the low-rank detection performance after ignoring temporal correlation of scan-statistics, and show an approximation to the temporal correlation of scan-statistic and give a more accurate theoretical analysis. Section IV presents the numerical example to demonstrate the performance of our method. Proofs can be found in the arXiv version this paper.

 \section{Setup and detection procedures}

Assuming a sequence of $p$-dimensional vectors $x_1, x_2, \ldots, x_t$, $t = 1, 2, \ldots$. There may be a change-point at time $\tau$ such that the distribution of the data stream changes. Assume the change happens at the covariance structure of the matrix. We will consider two related problems arising from specific applications: (i) the post-change covariance matrix model is a spiked covariance matrix; (ii) the covariance matrix before the change is full rank, and after the change becomes low-rank. Our goal is to detect such a change as quickly as possible.

\subsection{Largest eigenvalue procedure}

Formally, problem (i) can be stated as the following hypothesis test:
\begin{equation}
\left\{
\begin{array}{ll}
\textsf{H}_0:  & x_1, x_2, \ldots, x_t \stackrel{iid}{\sim} \mathcal{N}(0, I_p) \\
\textsf{H}_1: & 
x_1, x_2, \ldots, x_\tau \stackrel{iid}{\sim} \mathcal{N}(0, I_p),  \\
&~~~x_{\tau+1}, \ldots, x_t \stackrel{iid}{\sim} \mathcal{N}(0,  I_p + \theta u u^\intercal).
\end{array}\right. 
\label{eq:1}
\end{equation}
where $u \in \mathbb{R}^{p\times 1}$ represents a basis for a subspace and $u^\intercal u=1$. 


One may construct a maximum likelihood ratio statistic. However, since the signal covariance matrix or $u$ are unknown, we may have to form the generalized likelihood ratio statistic, which replaces the covariance matrix with the sample covariance. This may cause an issue since the statistic involves inversion of the sample covariance matrix, whose numerical property (such as condition number) is usually poor when $p$ is large. 

Alternatively, we consider the largest eigenvalue of the sample covariance matrix which is a natural detection statistic here. We adopt a scanning window approach to sequential methods. Consider samples in a time window of $[t-w, t]$ at each time $t$, where $w$ is the window size. Form the sample covariance matrix
\begin{equation}
\widehat{\Sigma}_{t-w, t} = \frac{1}{w} \sum_{i=t-w+1}^t x_i x_i^\intercal. \label{sampleCov}
\end{equation}
Using the largest eigenvalue of the sample covariance matrix, we form the {\it maximum eigenvalue procedure}, which is a stopping time given by:
\begin{equation}
\label{procedure1}
T_1 = \inf\{t: \lambda_{\max}(\widehat{\Sigma}_{t-w,t}) \geq b \}, 
\end{equation}
where $b > 0$ is the threshold, and $\lambda_{\max}(\Sigma)$ denotes the largest eigenvalue of a matrix $\Sigma$. An alarm is fired whenever the detection statistic exceeds the threshold $b$.

\vspace{-0.1in}
\subsection{Smallest eigenvalue procedure}

Problem (ii) can be formulated as 
\begin{equation}
\left\{
\begin{array}{ll}
\textsf{H}_0:  & x_1, x_2, \ldots, x_t \stackrel{iid}{\sim} \mathcal{N}(0, I_p) \\
\textsf{H}_1: & 
x_1, x_2, \ldots, x_\tau \stackrel{iid}{\sim} \mathcal{N}(0, I_p),  \\
&~~~x_{\tau+1}, \ldots, x_t \stackrel{iid}{\sim} \mathcal{N}(0,  \theta u u^\intercal).
\end{array}\right.
\label{H2}
\end{equation}
where $u \in \mathbb{R}^{p\times 1}$ represents the basis for a one-dimensional subspace, $u^\intercal u=1$, and $\theta > 0$ represents the signal power. 

For this problem, we consider the smallest eigenvalue of the sample covariance matrix as the testing statistic, since the change is from full-rank to low-rank. We also adopt a scanning window approach assuming a window size $w$. Using the smallest eigenvalue of the sample covariance matrix, we form the {\it minimum eigenvalue procedure}, which is a stopping time given by:
\begin{equation}
\label{procedure2}
T_2 = \inf\{t: [\lambda_{\min}(\widehat{\Sigma}_{t-w,t})]^{-1} \geq b \}, 
\end{equation}
where $b$ is a pre-specified threshold.  
 
\subsection{Choice of window-size $w$} There are two considerations when choosing $w$. First, $w$ has to be larger than the anticipated detection delay to not losing performance.  
Intuitively, we know that the signal strength $\theta$ need to be large enough in order to tell apart $\textsf{H}_0$ and $\textsf{H}_1$.  The result in \cite{baik2006eigenvalues} shows that there is a critical value of signal strength $\theta$, i.e., when $\theta$ is above $\sqrt{p/n}\sigma$ the largest eigenvalue can be separated from the background noise. In our setting, this translate to that we need the window length needs to satisfy $\theta \geq \sigma \sqrt{p/w}$, which poses a requirement on the minimum window length $w \geq p/(\theta^2/\sigma^2)$, where $\theta^2/\sigma^2$ can be interpreted as the signal-to-noise ratio.
Second, $w$ should be chosen greater than the anticipated longest detection delay.

\section{Theoretical analysis}

Two commonly used key performance metrics include the (i) the average run length (ARL), denoted as $\mathbb{E}^\infty[T_i]$, $i = 1, 2$, which is the expected duration in between two false alarms when there is no change, and (ii) the expected detection delay (EDD), denoted as $\mathbb{E}^1[T_i]$, $i = 1, 2$, which is expected number of samples before the procedure stops when the change-point happens at the first time $t = 1$. In the following, we characterize these two metrics theoretically. Usually, the threshold $b$ is chosen such the ARL meets a certain large targeted value (e.g., 5,000 or 10,000). 

\subsection{Background on extreme eigenvalue distributions}

Since our detection procedures are based on extreme eigenvalues of the sample covariance matrix, it is essential to review their distributions from random matrix theory.  There are two kinds of results typically available for eigenvalue distributions: one for the so-called {\it bulk}, which refers to the properties of the full set of eigenvalues, and one for the {\it extremes}, which are the (first few) largest and smallest eigenvalues. 

For bulk spectrum, two well-known results are the the Wigner's semicircle law (see, e.g., \cite{edelman2013random}), which describes the limiting density of eigenvalues of square symmetric random matrices, and the Marchenko-Pastur law for covariance matrices.  

Based on the bulk spectrum distribution, various results have been obtained for extreme eigenvalue distributions. 
Assume there are $n$ samples are i.i.d. $p$-dimensional Gaussian random vectors with zero-mean and identity covariance matrix. Let the sample covariance matrix be $\widehat{\Sigma}_{n}=\frac1n\sum_{i=1}^n x_i x_i^\intercal$. \cite{geman1980limit} shows that if $p/n \rightarrow \gamma >0$, the largest eigenvalue of the sample covariance matrix converges to $(1+\sqrt{\gamma})^2$ almost surely. 
Although this is a very good approximation, it does not describe the variability of the largest eigenvalue. To characterize the distribution of the largest eigenvalues, \cite{johnstone2001distribution} uses the Tracy-Widom law \cite{TracyWidom96}. Define the center and scaling constants 
\begin{equation}
\begin{aligned}
 \mu_{np}&=(\sqrt{n-1}+\sqrt{p})^2, \\
 \sigma_{np}&=(\sqrt{n-1}+\sqrt{p})(\frac1{\sqrt{n-1}}+\frac1{\sqrt{p}})^{1/3}. 
 \end{aligned}
 \label{eq:parameter}
 \end{equation} 
 If $p/n \rightarrow \gamma <1$, then the centered and scaled largest eigenvalue converges in distribution to the so-called Tracy-Widom law of order one $W_1 \sim F_1$ \cite{johnstone2001distribution}:
\begin{equation}
 \frac{\lambda_{\max}(\widehat{\Sigma}_{n})-\mu_{np}/n}{\sigma_{np}/n} \rightarrow W_1 \sim F_1,
 \label{eq:tw} 
 \end{equation}
The Tracy-Widom law can be described in terms of partial differential equation and the Airy function, and its tail can be computed conveniently using an R-package \textsf{RMTstat}. 

Similar result exists for the smallest eigenvalue of the sample covariance matrix \cite{feldheim2010universality}:
\[ \frac{\lambda_{\min}(\widehat{\Sigma}_{n})-\mu^\prime_{np}/n}{\sigma^\prime_{np}/n} \rightarrow W_1 \sim F_1, \]
where 
\[ \mu^\prime_{np}=(\sqrt{p}-\sqrt{n})^2, \] 
\[ \sigma^\prime_{np}=(\sqrt{p}-\sqrt{n})(\frac1{\sqrt{p}}-\frac1{\sqrt{n}})^{1/3}. \]

\subsection{Approximation to Average-Run-Length (ARL)}

Note that the scan-statistics in the Procedures (\ref{procedure1}) over time has temporal correlation due to overlapping data. For instance, $\widehat{\Sigma}_{t-w-1, t-1}$ and $\widehat{\Sigma}_{t-w, t}$ both involve samples $\{x_{t-w+1}, \ldots, x_{t-1}\}$. 
To simplify the analysis, we first ignore the temporal correlation of scan-statistics and suppose different sample covariance matrixes $\widehat{\Sigma}_{t-w,t}$ are independent even if they may have some overlaps.

The Tracy-Widom law above gives an asymptotic distribution of the largest eigenvalue, which is very useful for us to analyze the distribution of our scan-statistics and choose the threshold of the detection procedure.

\begin{proposition}[Approximation to ARL based on Tracy-Widom, ignore temporal correlation]
For any $\alpha \in (0, 1)$, when choosing for  $T_1$
\begin{equation}
b = \frac{\sigma_{wp}}{w} \mathcal{T}_\alpha + \frac{\mu_{wp}}{w}, 
\label{eq:threshold1}
\end{equation}
 $\mathbb{E}^\infty[T_1]\approx 1/\alpha$. Above, $\mathcal T_\alpha$ is the $\alpha$-upper-percentage point for Tracy-Widom law of order one.
When choosing for $T_2$, 
\begin{equation}
b = \left[\frac{\sigma^\prime_{wp}}{w} \mathcal T_{1-\alpha}+\frac{\mu^\prime_{wp}}{w}\right]^{-1},
\label{eq:threshold2}
\end{equation}
$\mathbb{E}^\infty[T_2]\approx 1/\alpha$.  
\label{ARL1}
\end{proposition}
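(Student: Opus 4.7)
The plan is to leverage the working assumption made explicitly in this subsection, namely that the scan statistics $\{\lambda_{\max}(\widehat{\Sigma}_{t-w,t})\}_t$ are treated as independent across $t$. Under that simplification, the stopping time reduces to a geometric random variable whose parameter is the per-step exceedance probability, and the Tracy-Widom law \eqref{eq:tw} is used only to calibrate that probability to $\alpha$.

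First, I would observe that under $\textsf{H}_0$, for each fixed $t \geq w$ the matrix $w\widehat{\Sigma}_{t-w,t}$ is Wishart$(w, I_p)$. Substituting $n \leftarrow w$ in \eqref{eq:tw} yields
\begin{equation*}
\frac{\lambda_{\max}(\widehat{\Sigma}_{t-w,t}) - \mu_{wp}/w}{\sigma_{wp}/w} \xrightarrow{d} W_1 \sim F_1
\end{equation*}
as $p,w \to \infty$ with $p/w \to \gamma \in (0,1)$. With $b$ chosen as in \eqref{eq:threshold1}, the event $\{\lambda_{\max}(\widehat{\Sigma}_{t-w,t}) \geq b\}$ is algebraically identical to the event that the centered-and-scaled statistic is at least $\mathcal{T}_\alpha$. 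By the definition of the $\alpha$-upper percentage point of $F_1$,
\begin{equation*}
p_\alpha := \mathbb{P}^\infty\bigl(\lambda_{\max}(\widehat{\Sigma}_{t-w,t}) \geq b\bigr) \approx \mathbb{P}(W_1 \geq \mathcal{T}_\alpha) = \alpha.
\end{equation*}
Under the temporal-independence working assumption, $T_1$ is then geometric with success probability $p_\alpha$, so $\mathbb{E}^\infty[T_1] = 1/p_\alpha \approx 1/\alpha$.

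For $T_2$, I would rewrite the stopping rule as $T_2 = \inf\{t : \lambda_{\min}(\widehat{\Sigma}_{t-w,t}) \leq 1/b\}$ and apply the analogous Tracy-Widom limit for the smallest eigenvalue recalled in the background subsection (which implicitly requires $w > p$ so that $\mu'_{wp}>0$). With the threshold from \eqref{eq:threshold2}, the per-step false-alarm probability equals $\mathbb{P}(W_1 \leq \mathcal{T}_{1-\alpha}) = \alpha$, since $\mathcal{T}_{1-\alpha}$ is the $(1-\alpha)$-upper percentage point of $F_1$, i.e., its $\alpha$-lower quantile. The identical geometric argument then gives $\mathbb{E}^\infty[T_2] \approx 1/\alpha$.

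The argument is essentially bookkeeping once Tracy-Widom is invoked; the only genuinely subtle ingredient is the independence assumption, which is clearly an idealization since consecutive windows share $w-1$ of their $w$ samples. This is not an obstacle for the proposition as stated because the statement explicitly brackets it off, but it is precisely the gap that the next subsection must close by incorporating the temporal correlation of the scan-statistic field via the Gaussian random-field change-of-measure of \cite{siegmund2010tail}.
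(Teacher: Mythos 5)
Your argument is essentially the paper's own proof: set the per-window exceedance probability to $\alpha$ via the Tracy--Widom limit for $\lambda_{\max}$ (and for $\lambda_{\min}$ after rewriting the stopping event as $\lambda_{\min}\le 1/b$), then invoke the temporal-independence idealization to get $\mathbb{E}^\infty[T]\approx 1/\alpha$. You merely make the geometric-random-variable step explicit where the paper says the ARL is ``roughly $1/\alpha$''; otherwise the reasoning is identical.
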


\noindent{\it Remark:} 
Note that the threshold choice here is different from that in \cite{berthet2013optimal}, since here we used the tail probability of the precise distribution of the extreme eigenvalues based on Tracy-Widom law, whereas the later uses an approximate threshold to characterization the rate of change for extreme eigenvalues, which may be less accurate.

\subsection{Correlation of temporal scan-statistics}

Now we aim to capture the temporal dependence in the scan statistics, which may be significant due to overlapping of adjacent time windows. To start, we consider $T_1$ which uses the largest eigenvalue. This is a very challenging task, as it is an open question what is the correlation between the largest eigenvalues of two sample covariance matrices share partially common data.  

For sample covariance matrix defined over a time window $[t-w, t]$ in \eqref{sampleCov}, define
\begin{equation}
Z_{t}=\lambda_{\max}(\widehat{\Sigma}_{t-w,t}).
\end{equation}
Define the correlation between two random variables as usual \[{\rm corr}(X, Y) = \frac{\mathbb{E}[X Y] - \mathbb E(X) \mathbb E(Y)}{\sqrt{{\rm Var}(X)}\sqrt{{\rm Var}(Y)}}.\]

\begin{figure}[h!]
\begin{center}
\includegraphics[width = 0.2\textwidth]{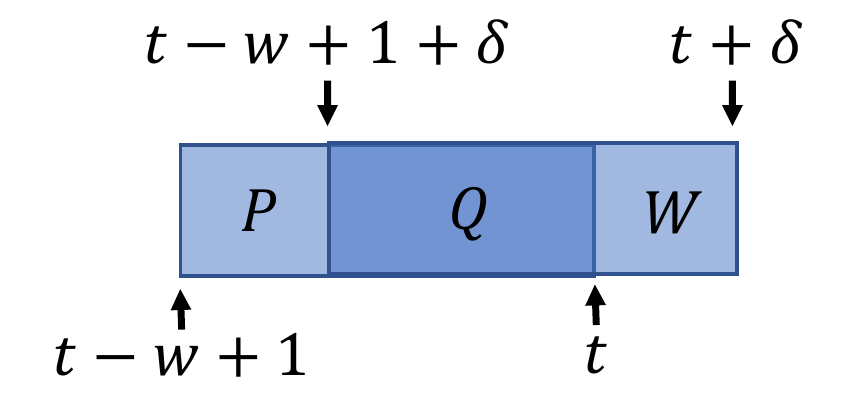} 
\end{center}
\caption{Illustration for computing covariance between the largest eigenvalues formed by sample covariance matrices formed by overlapping data.}
\vspace{-0.1in}
\end{figure}
\vspace{-0.1in}
We have the following lemma, which leads to some insights for our analysis. 
\begin{lemma}[Approximation to local correlation]\label{CovZ}
Under the null hypothesis, $x_t \stackrel{iid}{\sim} \mathcal{N}(0, I_p)$, For $\delta \in \mathbb{Z}^+$, let 
\[
P=\sum\limits_{i=t-w+1}^{t-w+\delta}x_ix_i^\mathrm{T},  
Q=\sum\limits_{i=t-w+\delta+1}^t x_ix_i^\mathrm{T},  
W=\sum\limits_{i=t+1}^{t+\delta} x_ix_i^\mathrm{T},
\]
then $P$, $Q$ and $W$ are mutually independent random matrices. We have that
\begin{equation*}
\begin{aligned}
\mathbb{E}[Z_{t}Z_{t+\delta}]
&\leq \frac1{w^2}\{\mathbb{E}[\lambda_{\max}(Q)^2]  \\
& +\mathbb{E}[\lambda_{\max}(Q)] (\mathbb{E}[\lambda_{\max}(P)] + E[\lambda_{\max}(W)])\\
&+\mathbb{E}[\lambda_{\max}(P)]\mathbb{E}[\lambda_{\max}(W)]\}
\end{aligned}
\end{equation*}
where the mean and second-order moments can be computed using Tracy-Widom law shown in \eqref{eq:tw}. Furthermore, an approximation to the local correlation for $w \gg p$ and small $\delta \ll w$, is given by
\begin{equation}
{\rm corr}(Z_t,Z_{t+\delta}) \lesssim 1 - \left(1+\frac{2p^{\frac13}+3p^{\frac16}\frac{c_1}{\sqrt{w}}+\frac{c_1^2}{w}}{c_2^2}\right)\delta + o(\delta)
\label{corr_approx}
\end{equation}
where $c_1 = \mathbb{E}(W_1)=-1.21$ and $c_2=\sqrt{{\rm Var}(W_1)}= 1.27$. 
\end{lemma}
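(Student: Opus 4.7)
The argument splits naturally into three steps: a deterministic inequality giving the four-term bound on $\mathbb{E}[Z_t Z_{t+\delta}]$, a substitution of Tracy--Widom moments, and a Taylor expansion in the asymptotic regime.

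\emph{Step 1 (the four-term bound).} Write $w\widehat\Sigma_{t-w,t} = P+Q$ and $w\widehat\Sigma_{t-w+\delta,t+\delta} = Q+W$, so that $wZ_t = \lambda_{\max}(P+Q)$ and $wZ_{t+\delta} = \lambda_{\max}(Q+W)$. Because $P$, $Q$, $W$ are sums over disjoint index blocks of the iid sequence $(x_i)$, they are mutually independent. Apply Weyl's subadditivity for PSD matrices, $\lambda_{\max}(A+B)\le\lambda_{\max}(A)+\lambda_{\max}(B)$, to each sum; multiply the two resulting inequalities, take expectations, and use independence to factor the mixed cross terms into products of marginal means. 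This produces the claimed four-term upper bound, divided by $w^2$.

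\emph{Step 2 (Tracy--Widom moments).} Into this bound I would substitute the moment approximations implied by \eqref{eq:tw}: for a Wishart from $n$ iid $p$-dimensional Gaussian samples, $\mathbb{E}[\lambda_{\max}]\approx\mu_{np}+\sigma_{np}c_1$ and hence $\mathbb{E}[\lambda_{\max}^2]\approx(\mu_{np}+\sigma_{np}c_1)^2+\sigma_{np}^2c_2^2$. Apply this with $n=w-\delta$ for $Q$, $n=\delta$ for $P$ and $W$, and $n=w$ for the reference Wishart $P+Q$, whose variance governs the denominator ${\rm Var}(Z_t)\approx\sigma_{wp}^2c_2^2/w^2$. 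Subtract $\mathbb{E}[Z_t]\mathbb{E}[Z_{t+\delta}]\approx ((\mu_{wp}+\sigma_{wp}c_1)/w)^2$ and divide by ${\rm Var}(Z_t)$ to get a closed-form upper bound on ${\rm corr}(Z_t,Z_{t+\delta})$ expressed purely in terms of $\mu_{np}$, $\sigma_{np}$, $c_1$, $c_2$.

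\emph{Step 3 (expansion for $w\gg p$, $\delta\ll w$).} Taylor-expand each of the constants in $\delta$ about $\delta=0$, using $(\sqrt{w-\delta}+\sqrt{p})^2=\mu_{wp}-\delta(1+\sqrt{p/w})+O(\delta^2)$, $\sigma_{(w-\delta)p}=\sigma_{wp}(1-\delta/(2w))+O(\delta^2)$, and analogous expansions for the $\delta$-block constants. Retain only the $O(\delta)$ contributions in the numerator, and divide by the denominator $\sigma_{wp}^2c_2^2\approx wc_2^2/p^{1/3}$; grouping should then yield the additive constant ``$1$'' together with the three terms $2p^{1/3}/c_2^2$, $3p^{1/6}c_1/(c_2^2\sqrt{w})$, and $c_1^2/(c_2^2 w)$ as the coefficient of $\delta$ in \eqref{corr_approx}.

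\emph{Main obstacle.} The delicate part is Step 3: several contributions of mixed order in $\sqrt{p/w}$ and $1/\sqrt{w}$ must combine cleanly to produce the compact coefficient shown, and any $\sqrt{p\delta}$-type pieces arising from the small block $P$ must be absorbed into the ``$\lesssim$''. A subtler conceptual issue is that Tracy--Widom is an asymptotic law requiring both sample size and dimension to diverge with fixed ratio, so invoking it for the small-$\delta$ blocks $P,W$ is heuristic; a fully rigorous version would either embed $\delta$ in a joint limit with $\delta\to\infty$, $\delta/w\to 0$, or replace the TW moments by a non-asymptotic mean/variance estimate for the largest Wishart eigenvalue.
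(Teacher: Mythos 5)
Your plan follows the paper's own proof almost exactly: the four-term bound in Step~1 is the same deterministic inequality (Weyl's subadditivity $\lambda_{\max}(A+B)\le\lambda_{\max}(A)+\lambda_{\max}(B)$ is just the paper's $\lambda_{\max}=\max_{\|u\|=1}u^\top(\cdot)u$ decomposition followed by distributing the max over the four cross terms and then multiplying, using non-negativity and independence), and Steps~2--3 reproduce the paper's Tracy--Widom moment substitution and Taylor expansion in $\vartheta=\delta/w$. You have also correctly identified the proof's genuine soft spot: the paper simply drops the terms involving the small blocks $P$ and $W$ as ``vanishing,'' without acknowledging that Tracy--Widom does not apply to a Wishart formed from only $\delta$ samples (indeed $\sigma_{np}$ is even singular at $n=1$), which is precisely the heuristic gap your ``main obstacle'' paragraph flags and why the result is stated with $\lesssim$.
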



\begin{table}[H]
\begin{center}
\caption{Comparison of upper bound for $\mathbb{E}[Z_{t}Z_{t+\delta}]$ in Lemma \ref{CovZ} with simulation, for $w=200$ and $p=10$.}
\begin{tabular}{|c|c|c|c|c|c|}
\hline
&$\delta=2$&$\delta=6$&$\delta=10$&$\delta=15$&$\delta=20$\\
\hline
simulation &1.980    &    1.980 &  1.979  & 1.979   &1.978     \\
\hline
upper bound & 2.144 &   2.241  &   2.294  & 2.342  &  2.379 \\
\hline
\end{tabular}
\label{upperbound}
\end{center}
\vspace{-0.1in}
\end{table}

Lemma \ref{CovZ} offers reasonably good approximation the the $\mathbb{E}[Z_{t}Z_{t+\delta}]$, as shown in Table \ref{upperbound}. 
We also run simulations to verify correlation between the largest eigenvalues between two overlapping data blocks ${\rm corr}(Z_t, Z_{t+\delta})$, shown in Fig. \ref{fig:cov_ratio}.
Note that the correlation increases as the ``overlapping ratio'' $1-\delta/w$ increases. Indeed, the correlation function can be linearly approximated in a local region when $\delta/w \ll 1$.

%
\begin{figure}[h]
\begin{center}
\includegraphics[width = 0.25\textwidth]{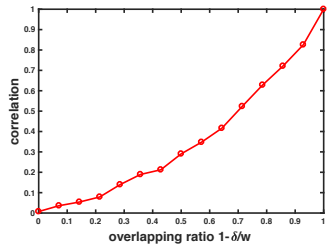} 
\end{center}
\caption{Simulated ${\rm corr}(Z_t, Z_{t+\delta})$ as a function of the ``overlapping-ratio'' $1-\delta/w$, for $w = 200$ and $p = 10$. 
}
\label{fig:cov_ratio}
\end{figure}

\begin{proposition}[Approximation to ARL, considering temporal correlation]
When $b\rightarrow\infty$,  if ${\rm corr}(Z_{t}, Z_{t+\delta}) =   1- \beta \delta + o(\delta)$, then
the average-run-length (ARL) of $T_1$ in \eqref{procedure1} can be approximated as 
\begin{equation}
\mathbb{E}^\infty[T_1] \approx [\beta b \phi(b) v(b\sqrt{2\beta})]^{-1},
\label{eq:ARL}
\end{equation}
Above, $v(\cdot)$ is a special function closely related to the Laplace transform of the overshoot over the boundary of a random walk (\cite{siegmund2007statistics}):
\[
v(x) \approx \frac{\frac2x\left[\Phi(\frac x2)-\frac12\right]}{\frac x2\Phi(\frac x2)+\phi(\frac x2)}, 
\]
and $\phi(x)$ and $\Phi(x)$ are the probability density function and the cumulative density function of the standard normal distribution. 
\label{ARL2}
\end{proposition}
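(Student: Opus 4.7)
The plan is to reduce the ARL calculation to a high-level excursion problem for the stationary process $\{Z_t\}$ under the null, and then apply the change-of-measure technique for Gaussian random fields from \cite{siegmund2010tail}. Under $\mathsf{H}_0$, $\{Z_t\}$ is stationary, and for a high threshold $b$ the up-crossings of $b$ by $Z_t$ form approximately a Poisson process. A standard renewal argument then yields
\[
\mathbb{E}^\infty[T_1] \;\approx\; \Bigl(\lim_{m\to\infty}\frac{1}{m}\,\mathbb{P}^\infty\bigl\{\max_{w\le t\le m} Z_t \ge b\bigr\}\Bigr)^{-1},
\]
so the task becomes computing the per-unit-time exceedance rate.

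Next I would standardize $Z_t$ via $Y_t=(Z_t-\mu_{wp}/w)/(\sigma_{wp}/w)$ so that, by the Tracy--Widom limit \eqref{eq:tw}, each $Y_t$ has (approximately) a known distribution with standard normal-like tail, while the local correlation assumption gives $\mathrm{Var}(Y_{t+\delta}-Y_t)=2\beta\delta+o(\delta)$. Applying the change-of-measure of \cite{siegmund2010tail}, I would tilt the law at a reference time $t_0$ so that $Y_{t_0}$ has mean $b$: the likelihood ratio is of exponential form $\exp(bY_{t_0}-b^2/2)$, producing the factor $\phi(b)$. Under the tilted measure, viewed locally around $t_0$, the centered process $Y_t-b$ behaves like a Gaussian random walk with negative drift and increment variance $2\beta$; the first-passage and last-exit calculations for this walk reduce to a classical discrete-time boundary-crossing problem whose overshoot correction is captured by the Laplace-transform-of-overshoot function $v(\cdot)$ evaluated at the natural scale $b\sqrt{2\beta}$ (see \cite{siegmund2007statistics}). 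The factor $\beta$ in front arises from the local time density. Assembling these pieces gives per-unit-time exceedance rate $\beta b\phi(b)v(b\sqrt{2\beta})$, and inverting produces the stated approximation \eqref{eq:ARL}.

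The main obstacle, and where I would spend most of the effort, is justifying the use of Gaussian-random-field technology for a non-Gaussian process: the marginal of $Z_t$ is Tracy--Widom, not normal. The argument I would push is a two-scale one. On the macroscopic scale, Poisson clumping reduces the global maximum to a collection of independent local maxima over windows of length $O(1/\sqrt{\beta})$. On each such local scale, $Y_t$ is a smooth functional of the $O(w)$ underlying Gaussian samples in the sliding window; since only $O(1/\sqrt{\beta})$ of those samples change between $Y_{t_0}$ and its nearby values, a CLT-type expansion yields a Gaussian approximation to the joint local distribution of the increments, with covariance dictated by $1-\beta\delta+o(\delta)$. This is the regime in which Siegmund's tilting identity is exact, and the overshoot function $v(\cdot)$ applies verbatim. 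The remaining technical work is to control the error in replacing the Tracy--Widom tail by a Gaussian tail at level $b$, which I would handle by noting that tail equivalence up to polynomial factors is absorbed into the $\approx$ in \eqref{eq:ARL}, as already done when using \eqref{eq:threshold1}.
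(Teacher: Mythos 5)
The paper does not actually supply a proof of this proposition: after stating it, the text simply remarks that ``the proof uses the change-of-measure techniques in \cite{siegmund2010tail},'' and no proof of Proposition~\ref{ARL2} appears in the appendix (only Proposition~\ref{ARL1}, Lemma~\ref{CovZ}, and Proposition~\ref{EDD} are proved there). So there is nothing in the paper to compare against line by line; what can be said is that your outline is exactly the Siegmund recipe the authors cite, and it is the natural route to the stated formula. You correctly identify the ingredients---standardize $Z_t$, use $\mathrm{corr}(Z_t,Z_{t+\delta})=1-\beta\delta+o(\delta)$ to get increment variance $2\beta$ per unit lag, exponentially tilt at a reference time to extract the $\phi(\cdot)$ factor, and invoke the discrete-overshoot correction $v(\cdot)$ at the natural scale $\sqrt{2\beta}$, with the extra $\beta$ coming from the localization/clump size. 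You also put your finger on the real gap, namely that $Z_t$ has a Tracy--Widom rather than Gaussian marginal, so Siegmund's Gaussian random-field machinery is being applied heuristically; your two-scale argument (Poisson clumping globally, Gaussian increment approximation locally) is a sensible way to rationalize it, and is as much as one can ask for at the level of rigor the paper itself adopts.

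One thing you should flag more explicitly: the symbol $b$ is being used in two incompatible ways. In $T_1=\inf\{t:\lambda_{\max}(\widehat\Sigma_{t-w,t})\geq b\}$, $b$ lives on the raw eigenvalue scale (around $1.7$ in the paper's Table~\ref{eq:threshold}), but the quantity that enters the exponential tilting and the factors $\phi(\cdot)$, $v(\cdot\sqrt{2\beta})$ must be the \emph{standardized} threshold $\tilde b=(b-\mu_{wp}/w)/(\sigma_{wp}/w)$ (roughly $3$--$4$ in that table). Plugging the raw $b\approx 1.7$ into $[\beta b\phi(b)v(b\sqrt{2\beta})]^{-1}$ gives an ARL orders of magnitude too small, whereas using $\tilde b$ reproduces the tabulated values. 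Your derivation actually produces the formula in terms of the standardized threshold (you tilt so that ``$Y_{t_0}$ has mean $b$''), which is the correct reading, so you have it right; just say explicitly that the $b$ appearing in \eqref{eq:ARL} is the standardized threshold, since the proposition as printed conflates the two, and being precise here is part of making the argument sound.
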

\noindent The proof uses the change-of-measure techniques in \cite{siegmund2010tail}.

Now we verify the accuracy of the threshold obtained without and with considering the temporal correlation (Proposition \ref{ARL1} and Proposition \ref{ARL2}, respectively). 
We find that  indeed the threshold with temporal correlation correlation \eqref{eq:ARL} with $\beta$ in \eqref{corr_approx} is more accurate than the threshold obtained from Tracy-Widom law ignoring temporal correlation (which seems to perform reasonable well).
%
\begin{table}[H]
\caption{Comparison of the threshold $b$ obtained from simulation and the approximation. Window length $w=200$, dimension $p = 10$. Simulation is the reference threshold. 
}
\begin{tabular}{|c|c|c|c|c|c|c|}
\hline
Target ARL & 5k  &  10k  &  20k  & 30k &  40k & 50k\\
\hline
Simulation   &    1.633  &  1.661 &   1.688   & 1.702   & 1.713  &  1.722   \\
\hline
TW approx \eqref{eq:threshold1} & 1.738 & 1.763 & 1.787 & 1.800 &  1.809 & 1.816\\
\hline
\eqref{eq:ARL}, $\beta$ in \eqref{corr_approx} & 1.699  &  1.713  &  1.727   & 1.735 &   1.740  &  1.744 \\
\hline
\end{tabular}
\label{eq:threshold}
\end{table}


\subsection{Approximation to EDD}


A lower bound to EDD is computed using the same strategy as in \cite{xie2016sequential} but here the approximation is more accurate.

\begin{proposition}[Lower bound to EDD]
When $b\rightarrow \infty$
\begin{equation}
\mathbb{E}^1[T] \gtrsim \frac{b^\prime + e^{-b^\prime}-1}{\frac{\theta}{2}-\frac12\log(1+\theta)}.
\label{eq:EDD}
\end{equation}
where $b^\prime = \frac12[1-\frac1{1+\theta}][b-\frac{\log(1+\theta)}{1-1/(1+\theta)}]w$.
\label{EDD}
\end{proposition}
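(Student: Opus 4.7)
My plan is to follow the strategy of the authors' earlier work \cite{xie2016sequential}: exploit the fact that under the rank-one spike alternative the log-likelihood ratio reduces to a function of the projection of each sample onto the signal direction $u$, and then apply Wald's identity together with a renewal-theoretic overshoot correction to bound the expected stopping time of the resulting random walk.

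I would start by computing the per-sample log-likelihood ratio of $\mathcal{N}(0, I + \theta uu^\intercal)$ against $\mathcal{N}(0, I)$: a direct Gaussian calculation gives $\ell_i = -\tfrac{1}{2}\log(1+\theta) + \tfrac{\theta}{2(1+\theta)}(u^\intercal x_i)^2$, whose post-change mean is the Kullback--Leibler divergence $D := \tfrac{\theta}{2} - \tfrac{1}{2}\log(1+\theta)$ appearing in the denominator of \eqref{eq:EDD}. Summing over a window $[t-w+1,t]$ yields $L_t - L_{t-w} = \tfrac{\theta w}{2(1+\theta)}\, u^\intercal \widehat{\Sigma}_{t-w,t} u - \tfrac{w}{2}\log(1+\theta)$, a strictly increasing affine function of the projected quadratic form whose value at $u^\intercal \widehat{\Sigma}_{t-w,t} u = b$ equals precisely $b'$. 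This algebraic identity is why $b'$ is the natural log-LR scale corresponding to the raw eigenvalue scale $b$, and by Rayleigh's principle $u^\intercal \widehat{\Sigma}_{t-w,t} u \leq \lambda_{\max}(\widehat{\Sigma}_{t-w,t})$, so the $u$-direction carries exactly the information that governs stopping.

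Next I would apply Wald's identity to the cumulative random walk $L_t = \sum_{i=1}^t \ell_i$, which under the post-change law has positive drift $D$. Optional stopping gives $\mathbb{E}^1[L_T] = D \cdot \mathbb{E}^1[T]$, reducing the task to bounding $\mathbb{E}^1[L_T]$ at the first-passage time to level $b'$. A standard renewal-theoretic calculation (cf.\ Chapter VIII of \cite{siegmund2007statistics}) exploiting the asymptotically exponential distribution of the overshoot $L_T - b'$ yields $\mathbb{E}^1[L_T] \geq b' + e^{-b'} - 1$; one recognizes the closed form as $\int_0^{b'}(1-e^{-s})\,ds = \mathbb{E}[(b' - X)_+]$ with $X \sim \mathrm{Exp}(1)$, which quantifies the expected ``deficit'' of the terminal random walk relative to the nominal threshold. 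Dividing by $D$ then delivers the bound in \eqref{eq:EDD}.

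The main obstacle will be transferring this clean random-walk overshoot calculation back to the sliding-window eigenvalue statistic, because $\lambda_{\max}(\widehat{\Sigma}_{t-w,t})$ strictly dominates $u^\intercal \widehat{\Sigma}_{t-w,t} u$ by a random amount that must not swamp the log-LR inequality. Controlling this gap requires the operating regime $w \gg p/\theta^2$ identified in Section II-C, where the spiked-model results of \cite{baik2006eigenvalues} together with the Tracy--Widom scaling of the bulk fluctuations ensure the discrepancy is of lower order and can be absorbed into the $\gtrsim$ symbol appearing in the statement of the proposition.
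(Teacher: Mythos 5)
Your overall strategy matches the paper's: reduce to the rank--one direction $u$ (via Grothendieck's inequality, citing the same operating regime $w\gg p/\theta^2$), compute the per--sample log--likelihood ratio $\ell_i = -\tfrac12\log(1+\theta)+\tfrac{\theta}{2(1+\theta)}(u^\intercal x_i)^2$ for the Gaussian scale change, identify the KL divergence $D=\tfrac{\theta}{2}-\tfrac12\log(1+\theta)$ as the drift and the algebraic rescaling $b\mapsto b'$, and then invoke Siegmund--style renewal analysis. That is exactly the paper's route.

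The one step that is imprecise in your write--up is how you obtain the numerator $b'+e^{-b'}-1$. You propose to apply Wald's identity $\mathbb{E}^1[L_T]=D\,\mathbb{E}^1[T]$ to the cumulative walk $L_t=\sum_i\ell_i$ and then bound $\mathbb{E}^1[L_T]$ ``at the first-passage time to level $b'$'' via an exponential overshoot. But $T$ for the windowed procedure is not a first-passage time of $L_t$; it is the first time the windowed increment $L_t-L_{t-w}$ exceeds $b'$, so $L_T$ is not guaranteed to have a clean relation to $b'$ (indeed $L_T = (L_T - L_{T-w}) + L_{T-w}\ge b' + L_{T-w}$, and $L_{T-w}$ is random). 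Moreover, genuine first-passage overshoot analysis of the free walk would give $\mathbb{E}[L_T]=b'+\rho$ with $\rho>0$, i.e.\ a quantity strictly larger than $b'$, whereas $b'+e^{-b'}-1<b'$; the formula you target is not an overshoot bound at all. The paper instead introduces the CUSUM stopping time $\widetilde{T}=\inf\{t:\max_{k<t}\sum_{i=k+1}^t\ell_i\ge b'\}$, observes that its statistic dominates the fixed-window statistic (since the maximum over $k$ includes $k=t-w$), and therefore $\widetilde{T}\le T$ pathwise, giving $\mathbb{E}^1[T]\ge\mathbb{E}^1[\widetilde{T}]$; the closed-form $\mathbb{E}^1[\widetilde{T}]\approx (b'+e^{-b'}-1)/D$ is the known CUSUM EDD approximation from \cite{siegmund2013sequential}, not a Wald-plus-overshoot computation for the free walk. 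If you make that comparison $\widetilde{T}\le T$ explicit and then cite the CUSUM formula, your argument lines up exactly with the paper's; your interpretation of $b'+e^{-b'}-1 = \mathbb{E}[(b'-X)_+]$, $X\sim\mathrm{Exp}(1)$, is a nice remark but is a property of the CUSUM formula, not a derivation of it.
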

The proof is based on an result for the CUSUM procedure in \cite{siegmund2013sequential}.  
Consistent with intuition, Proposition \ref{EDD}, the right-hand-side of (\ref{eq:EDD}) is a decreasing function of $\theta$, representing the signal-to-noise ratio. 
Moreover, we compare the lower bound in Proposition \ref{EDD} with simulated average delay, as shown in Fig. \ref{fig:EDD}. In the regime of small detection delay (which is the main regime of interest), the lower bound serves as a reasonably good approximation. There is still room for improvement, which we are working on leveraging Grothendieck's Inequality \cite{guedon2016community}. 
\begin{figure}[h]
\begin{center}
\includegraphics[width = 0.25\textwidth]{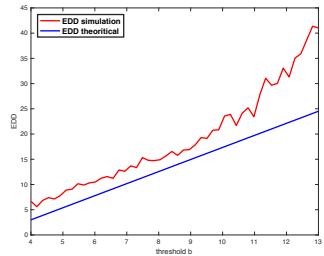} 
\end{center}
\caption{The comparison of the lower bound on EDD and the simulated EDD corresponding to different thresholds $b$, here the window length $w=20$, signal strength $\theta=10$, dimension $p=10$.}
\label{fig:EDD}
\end{figure}

\section{Real-data examples}


In this section we present two examples for detecting swarm behavior changes \cite{berger2016classifying}. We are interested in detecting the transition of a swarm (represented by dots in Fig. \ref{fig:swarm} by dots) transitioning from random behavior to some organized behavior: such as ``flock'' or ``torus''. The swarm dataset contains the coordinates of each individual at each unit time between a specific time interval. 
%
The swarm dataset contains a sequence of the coordinates and velocity of each individual at each time. We verify that in these cases the change can be well represented by the setting in \eqref{H2}, and applying $T_2$ in \eqref{procedure2} can quickly detect the transition from random behavior to ``flock'' or ``torus''.

\begin{figure}[h!]
\begin{center}
\begin{tabular}{c}
\includegraphics[width = 0.13\textwidth]{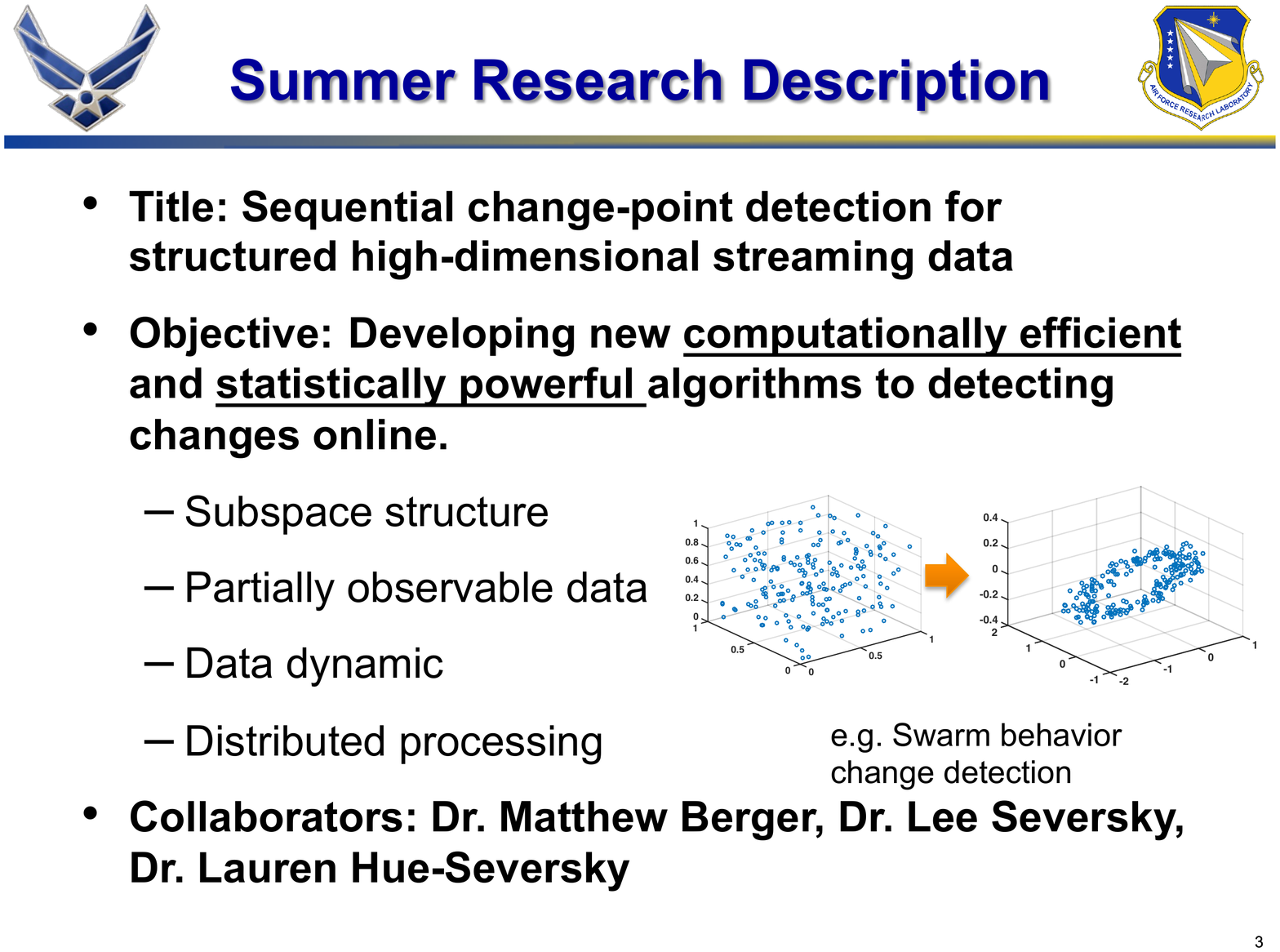} 
\includegraphics[width = 0.13\textwidth]{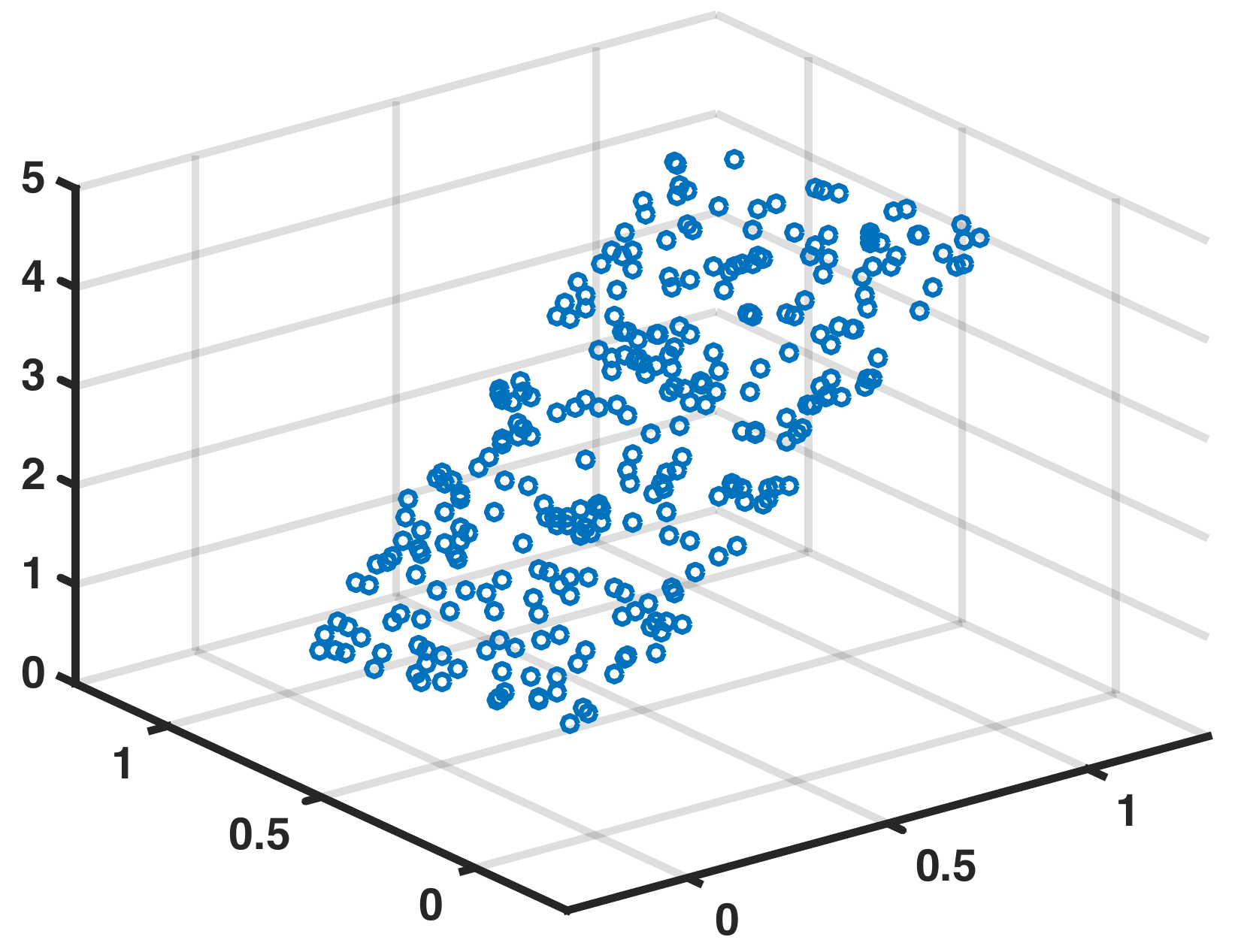}
\includegraphics[width = 0.15\textwidth]{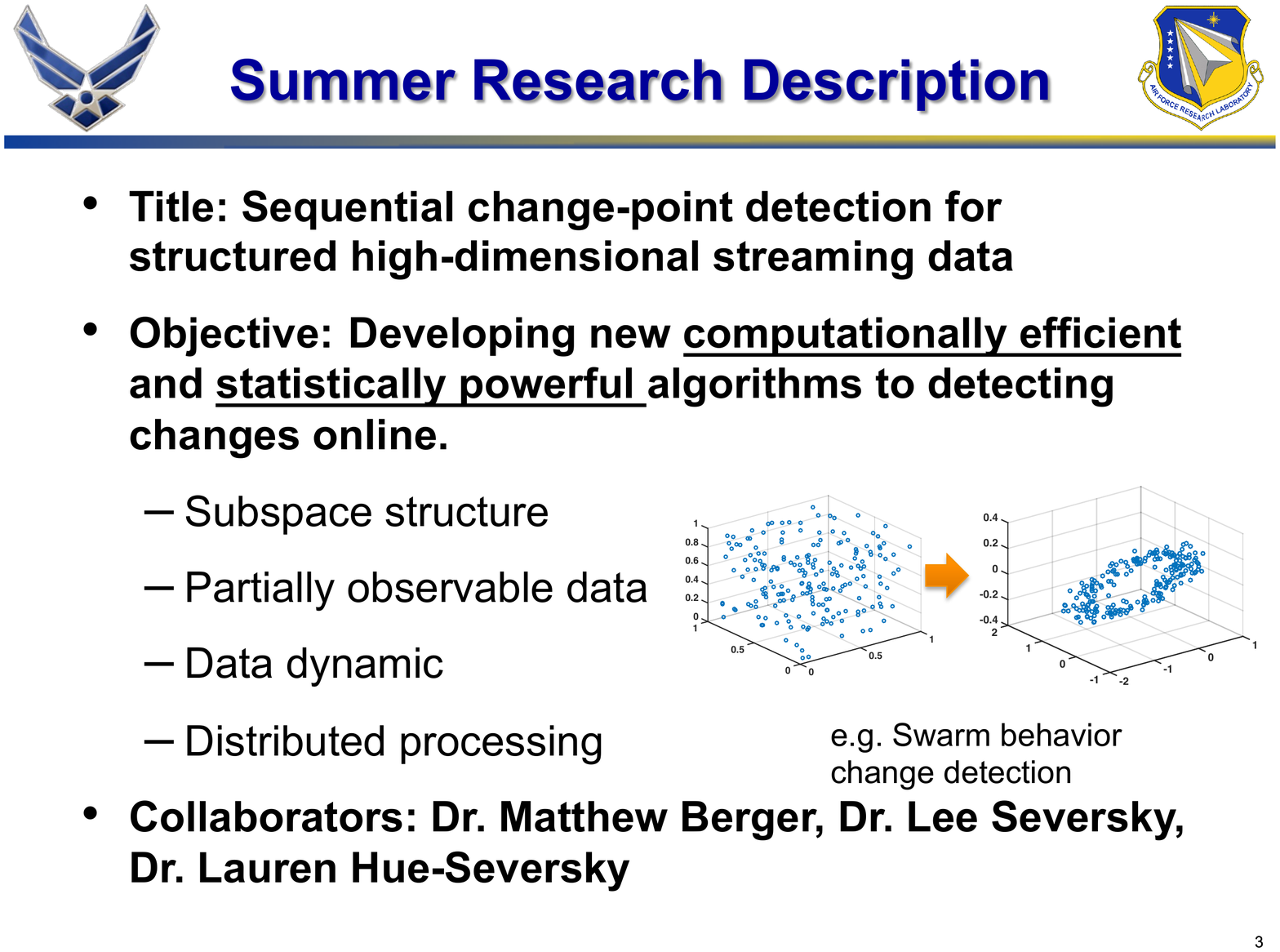}
\end{tabular}
\end{center}
\vspace{-0.1in}
\caption{Pattern of swarm (from left to right): random, flock (move along the same direction); torus.}
\label{fig:swarm}
\vspace{-0.1in}
\end{figure}

\begin{figure}[h!]
\begin{center}
\begin{tabular}{cc}
\includegraphics[width = 0.15\textwidth]{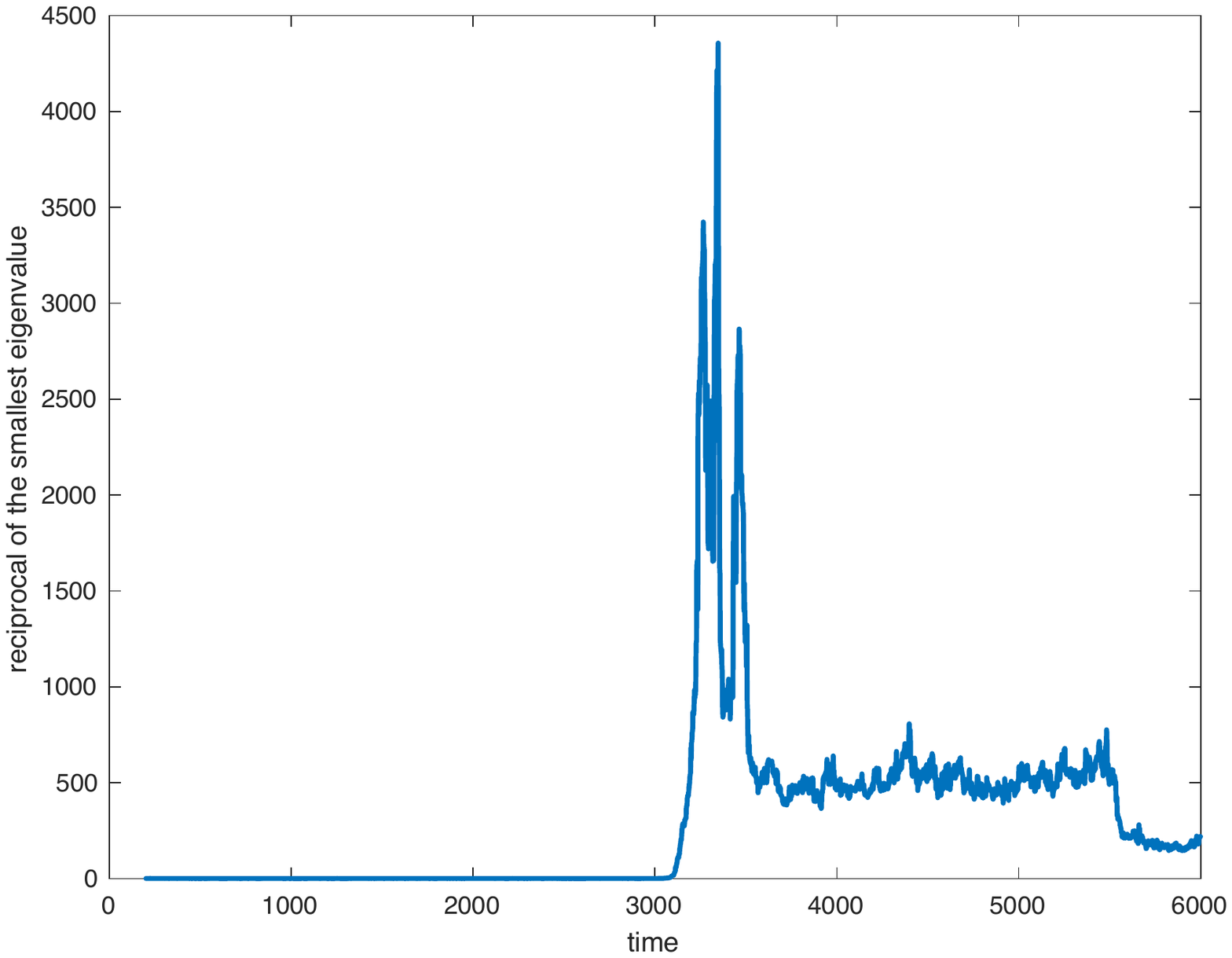} & 
\includegraphics[width = 0.15\textwidth]{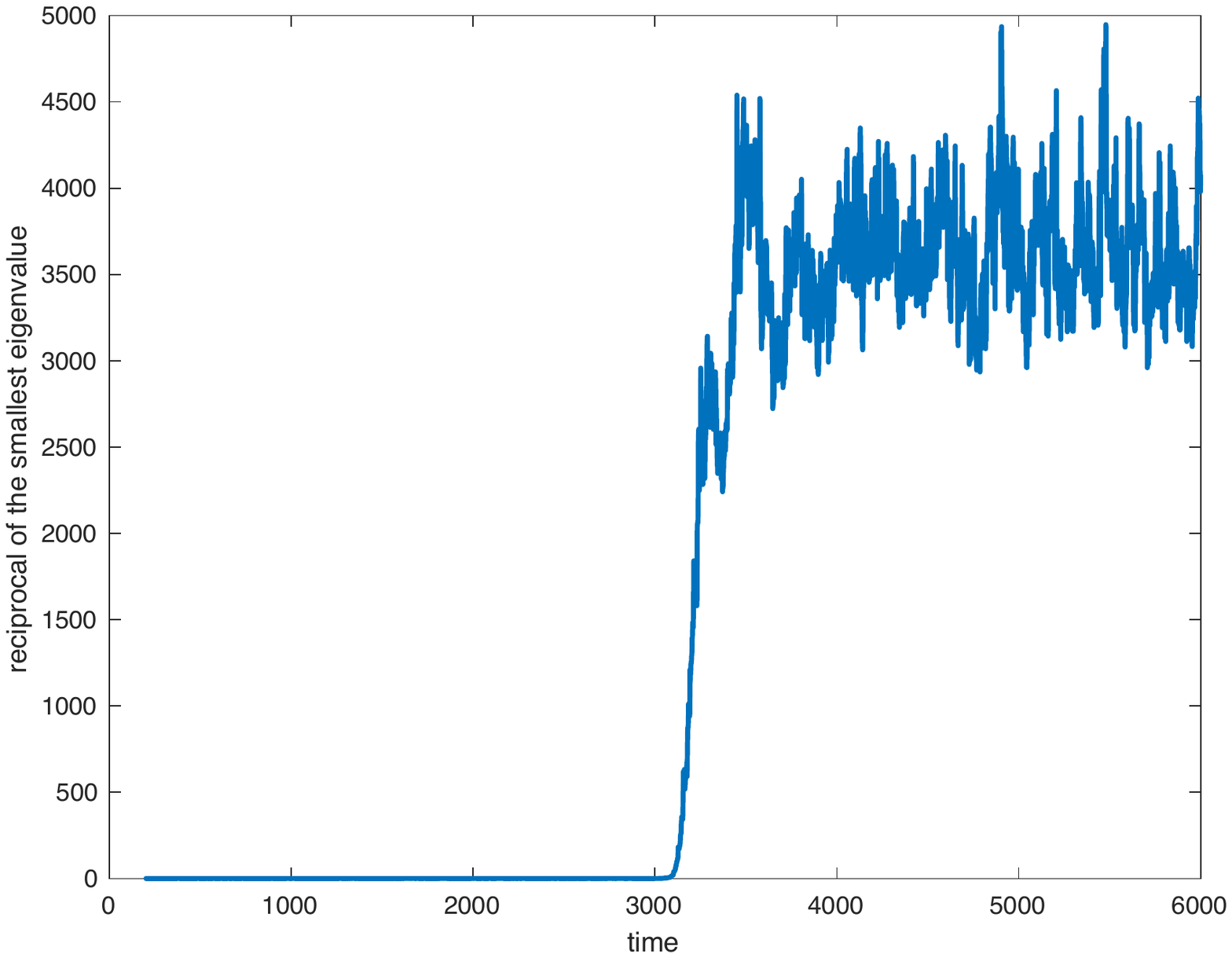}\\ 
(a)&(b)
\end{tabular}
\end{center}
\caption{Plot of detection statistic of $T_2$ in \eqref{procedure2} over time  for (a) random to flock; (b) random to torus. The true change-point happens in the middle of the sequence at time 3000. $T_2$ can detect the change quickly in both cases.}
\label{fig:swarm_detection}
\vspace{-0.2in}
\end{figure}


\clearpage
\bibliographystyle{ieeetr}
\bibliography{CAMSAP2017}

\clearpage

 \begin{proof}[Proof of Proposition \ref{ARL1}]
For problem (1), if we ignore the temporal correlation of scan-statistics and set $\mathbb{P} \left\{\lambda_{\max}(\widehat{\Sigma}_{t-w,t}) \geq b \right\} = \alpha$, then the ARL is roughly $1/\alpha$.

\begin{equation}
\begin{aligned}
& \mathbb{P} \left\{\lambda_{\max}(\widehat{\Sigma}_{t-w,t}) \geq b \right\} \\
= &\mathbb{P} \left\{\frac{ \lambda_{\max}(\widehat{\Sigma}_{t-w,t}) -\mu_{wp}/w}{\sigma_{wp}/w} \geq \frac{b-\mu_{wp}/w}{\sigma_{wp}/w}  \right\} \\
\approx &\mathbb{P} \left\{ W_1 \geq \frac{b -\mu_{wp}/w}{\sigma_{wp}/w} \right\}. \\
= & \alpha = 1/\text{ARL}\\
\end{aligned}
\end{equation}
where $W_1$ is a random variable following Tracy-Widom distribution, and $\mu_{wp}$ and $\sigma_{wp}$ are given in (\ref{eq:parameter}).

Therefore, we have that 
\begin{equation}
 \frac{b -\mu_{wp}/w}{\sigma_{wp}/w} =  \mathcal{T}_{\alpha} \Rightarrow b = \frac{\sigma_{wp}}{w} \mathcal{T}_{\alpha}+\frac{\mu_{wp}}{w} 
 \label{eq:ARLapprox}
\end{equation}
where $\mathcal{T}_\alpha$ is the upper $\alpha$ quantile of the Tracy-Widom distribution, i.e., $\mathbb{P} \left\{ W_1 \geq \mathcal{T}_\alpha \right\} = \alpha$.

Similarly, for problem (2), we have

\begin{equation}
\begin{aligned}
& \mathbb{P} \left\{[\lambda_{\min}(\widehat{\Sigma}_{t-w,t})]^{-1} \geq b \right\} \\
= & \mathbb{P} \left\{\lambda_{\min}(\widehat{\Sigma}_{t-w,t}) \leq 1/b \right\} \\
= &\mathbb{P} \left\{\frac{ \lambda_{\min}(\widehat{\Sigma}_{t-w,t}) -\mu^\prime_{wp}/n}{\sigma^\prime_{wp}/w} \leq \frac{1/b-\mu^\prime_{wp}/w}{\sigma^\prime_{wp}/w}  \right\} \\
\approx &\mathbb{P} \left\{ W_1 \leq \frac{1/b -\mu^\prime_{wp}/w}{\sigma^\prime_{wp}/w} \right\} \\
= &\alpha  =  \frac1{\text{ARL}}. \\
\end{aligned}
\end{equation}
where $W_1$ is a random variable following Tracy-Widom distribution.

Therefore, we have that 
\begin{equation}
 \frac{1/b -\mu^\prime_{wp}/w}{\sigma^\prime_{wp}/w} =  \mathcal{T}_{1-\alpha} \Rightarrow b = \frac1{\frac{\sigma^\prime_{wp}}{w} \mathcal{T}_{1-\alpha}+\frac{\mu^\prime_{wp}}{w} }
\end{equation}
here $\mathcal T_{1-\alpha}$ is the upper $1-\alpha$ quantile of the Tracy-Widom distribution, also known as the $\alpha$ lower quantile of the Tracy-Widom distribution.
\end{proof}

\begin{proof}[Proof of Lemma \ref{CovZ}]
Now we also want to give a general upper bound for the covariance of two eigenvalues $Z_{t}$ and $Z_{t+\delta}$, where
\[
\begin{aligned}
Z_{t}&=\lambda_{\max}(\widehat{\Sigma}_{t-w,t})=\max\limits_{\left\Vert u\right\Vert=1} u^\mathrm{T}(\frac1w\sum\limits_{i=t-w+1}^t x_ix_i^\mathrm{T})u,\\
Z_{t+\delta}&=\lambda_{\max}(\widehat{\Sigma}_{t-w+\delta,t+\delta})=\max\limits_{\left\Vert v\right\Vert=1} v^\mathrm{T}(\frac1w\sum\limits_{i=t-w+\delta+1}^{t+\delta} x_ix_i^\mathrm{T})v
\end{aligned}
\]

Now let 
\[
P=\sum\limits_{i=t-w+1}^{t-w+\delta}x_ix_i^\mathrm{T},  
Q=\sum\limits_{i=t-w+\delta+1}^t x_ix_i^\mathrm{T},  
W=\sum\limits_{i=t+1}^{t+\delta} x_ix_i^\mathrm{T},
\]
then $P$, $Q$ and $W$ are independent random matrices.  
\[
\begin{aligned}
&\mathbb{E}[Z_{t}Z_{t+\delta}]\\
=&\frac1{w^2}\mathbb{E}\left[\! \max\limits_{\left\Vert u\right\Vert=\left\Vert v\right\Vert=1} \! u^\mathrm{T}( \! \sum\limits_{i=t-w+1}^t \! x_ix_i^\mathrm{T})uv^\mathrm{T}(\! \sum\limits_{i=t-w+\delta+1}^{t+\delta} \! x_ix_i^\mathrm{T})v\right]\\
=&\frac1{w^2}\mathbb{E}\left[\max\limits_{\left\Vert u\right\Vert= \left\Vert v\right\Vert=1}u^\mathrm{T}(P+Q)uv^\mathrm{T}(Q+W)v\right]\\
\le&\frac1{w^2}\mathbb{E}\left\{\max\limits_{\left\Vert u\right\Vert = \left\Vert v\right\Vert=1}[u^\mathrm{T}Puv^\mathrm{T}Qv]+\max\limits_{\left\Vert u\right\Vert = \left\Vert v\right\Vert=1}[u^\mathrm{T}Puv^\mathrm{T}Wv]\right.\\
&\quad +\max\limits_{\left\Vert u\right\Vert = \left\Vert v\right\Vert=1}[u^\mathrm{T}Quv^\mathrm{T}Wv]+\max\limits_{\left\Vert u\right\Vert = \left\Vert v\right\Vert=1}[u^\mathrm{T}Quv^\mathrm{T}Qv]\bigg\}\\
=&\frac1{w^2}\left\{\mathbb{E}[\lambda_{\max}(Q)^2] + \mathbb{E}[\lambda_{\max}(P)]E[\lambda_{\max}(W)]\right.\\
&\quad\quad + \mathbb{E}[\lambda_{\max}(Q)]\left(\mathbb{E}[\lambda_{\max}(P)]+\mathbb{E}[\lambda_{\max}(W)]\right)\}\\
\end{aligned}
\]
These mean and variance can be computed using Tracy-Widom distribution as shown in (\ref{eq:parameter}).
Denote $c_1=\mathbb{E}(W_1)=-1.21, c_2 = \sqrt{Var(W_1)}=1.27$. Let $\vartheta = \delta/w$.
Because $p$ is a fixed constant here, we just write $\mu_{n}$ and $\sigma_n$ instead of $\mu_{np}$ and $\sigma_{np}$ to simplify our notation.
\[
\begin{aligned}
\mathbb{E}[Z_{t}Z_{t+\delta}] & \leq \underbrace{\left(\frac{\mu_{w(1-\vartheta)}+c_1\sigma_{w(1-\vartheta)}}{w}\right)^2}_{\uppercase\expandafter{\romannumeral1}}+\underbrace{\left(\frac{c_2\sigma_{w(1-\vartheta)}}{w}\right)^2}_{\uppercase\expandafter{\romannumeral2}}\\
&+\underbrace{2\left[\frac{\mu_{w(1-\vartheta)}+c_1\sigma_{w(1-\vartheta)}}{w}\right]\left(\frac{\mu_{w\vartheta}+c_1\sigma_{w\vartheta}}{w}\right)}_{\uppercase\expandafter{\romannumeral3}}\\
&+\underbrace{\left(\frac{\mu_{w\vartheta}+c_1\sigma_{w\vartheta}}{w}\right)^2}_{\uppercase\expandafter{\romannumeral4}}\\
\end{aligned}
\]
Consider the case when $w/p \rightarrow \infty$, therefore we have:
\begin{align*}
&\frac{\mu_{w(1-\vartheta)}}{w} \\
&= \frac{\left(\sqrt{w(1-\vartheta)-1}+\sqrt{p}\right)^2}{w}\\
&\approx \frac{(\sqrt{w(1-\vartheta)-1})^2(1+\sqrt{p}/\sqrt{w(1-\vartheta)-1})^2}{w}\\
&\approx \frac{w(1-\vartheta)-1}{w}\\
&\approx 1-\vartheta\\
\end{align*}
\begin{align*}
&\frac{\sigma_{w(1-\vartheta)}}{w} \\
&= \frac{\left(\sqrt{w(1-\vartheta)-1}+\sqrt{p}\right)\left(\frac1{\sqrt{w(1-\vartheta)-1}}+\frac1{\sqrt{p}}\right)^\frac13}{w}\\
&\approx \frac{\sqrt{w(1-\vartheta)-1}p^{-\frac16}(1+\frac{\sqrt{p}}{\sqrt{w(1-\vartheta)-1}})^{\frac43}}{w}\\
&\approx \sqrt{\frac{1-\vartheta}{w}}p^{-\frac16}\\
\end{align*}
Now we do Taylor expansion for components \uppercase\expandafter{\romannumeral1} and \uppercase\expandafter{\romannumeral2}. 
For part \uppercase\expandafter{\romannumeral1}, we have:
\[
\begin{aligned}
&\left(\frac{\mu_{w(1-\vartheta)}+c_1\sigma_{w(1-\vartheta)}}{w}\right)^2\\
& \approx  \left(1-\vartheta + c_1\sqrt{\frac{1-\vartheta}{w}}p^{-\frac16}\right)^2 \\
&=  (1-\vartheta)\left(\sqrt{1-\vartheta}+c_1\frac{p^{-\frac16}}{\sqrt{w}}\right)^2\\
&= (1-\vartheta)\left(1-\vartheta+2c_1\frac{p^{-\frac16}}{\sqrt{w}}\sqrt{1-\vartheta}+c_1^2\frac{p^{-\frac13}}{w}\right)\\
&\approx  (1-\vartheta)\left(1-\vartheta+2c_1\frac{p^{-\frac16}}{\sqrt{w}}(1-\frac12\vartheta+o(\vartheta))+c_1^2\frac{p^{-\frac13}}{w}\right)\\
&\approx \left(1+c_1\frac{p^{-\frac16}}{\sqrt{w}}\right)^2 \!\!\!-\!\! \left(1+c_1\frac{p^{-\frac16}}{\sqrt{w}}\right)\!\!\left(2+c_1\frac{p^{-\frac16}}{\sqrt{w}}\right)\vartheta \!+\! o(\vartheta)\\
\end{aligned}
\]
For part \uppercase\expandafter{\romannumeral2},
\[
\left(\frac{c_2\sigma_{w(1-\vartheta)}}{w}\right)^2 \approx c_2^2 \frac{1-\vartheta}{w}p^{-\frac13}
\]
Since our main focus is the local covariance structure, $\vartheta$ is small. Therefore parts \uppercase\expandafter{\romannumeral3} and \uppercase\expandafter{\romannumeral4} will vanish. 
In total, we have
\begin{align*}
{\rm corr}(Z_t,Z_{t+\delta}) 
&=\frac{\mathbb{E}[Z_t Z_{t+\delta}] - \mathbb E(Z_t) \mathbb E(Z_{t+\delta})}{\sqrt{{\rm Var}(Z_t)}\sqrt{{\rm Var}(Z_{t+\delta})}}\\
\lesssim & \frac1{\left(\frac{c_2p^{-\frac16}}{\sqrt{w}}\right)^2}\Bigg\{\left(1+c_1\frac{p^{-\frac16}}{\sqrt{w}}\right)^2  + c_2^2 \frac{1-\vartheta}{w}p^{-\frac13}\\
- & \left(1+c_1\frac{p^{-\frac16}}{\sqrt{w}}\right)\left(2+c_1\frac{p^{-\frac16}}{\sqrt{w}}\right)\vartheta\\
-&\left(1+ \frac{c_1}{\sqrt{w}}p^{-\frac16}\right)^2 + o(\vartheta)\Bigg\}
\end{align*}
\begin{align*}
=& 1-\left(1+\frac{\left(1+c_1\frac{p^{-\frac16}}{\sqrt{w}}\right)\left(2+c_1\frac{p^{-\frac16}}{\sqrt{w}}\right)}{\frac{c_2^2}{w}p^{-\frac13}}\right)\vartheta + o(\vartheta)\\
=& 1 - \left(1+\frac{2p^{\frac13}+3p^{\frac16}\frac{c_1}{\sqrt{w}}+\frac{c_1^2}{w}}{c_2^2}\right)\delta + o(\delta)\\
\end{align*}

\end{proof}

\begin{proof}[Proof of Proposition \ref{EDD}]
To prove this theorem, we first relate the detection procedure to a CUSUM procedure, note that 
\begin{equation}
\lambda_{\max}(\widehat{\Sigma}_{k,t}) = \max_{q\in \mathbb{R}^{p\times 1}, \left\|q\right\|=1}
|q^\intercal \widehat{\Sigma}_{k, t} q|
\label{eq:q}
\end{equation}
For each $q$, we have
\[
(t-k)|q^\intercal \widehat{\Sigma}_{k, t} q| =  \sum_{i=k+1}^t (q^\intercal x_i)^2
\]
According to the Grothendieck's Inequality (see \cite{guedon2016community}), the $q$ that attain the maximum in equation (\ref{eq:q}) is very close to $u$ under $\textsf{H}_1$. Therefore, assuming the optimal $q$ always equals to $u$ will only cause a small error but will bring great convenience to our analysis. 

Now we have under $\textsf{H}_0$, $q^\intercal x_i \sim \mathcal{N}(0, 1)$ and under $\textsf{H}_1$, $q^\intercal x_i \sim \mathcal{N}(0, 1+\theta)$. Let $f_0$ denote the pdf of $\mathcal{N}(0, 1)$ and $f_1$ the pdf of $\mathcal{N}(0, 1+\theta)$. we first compute the log-likelihood ratio:
\[
\log\frac{f_1(y)}{f_0(y)} = -\frac12\log(1+\theta)+\frac12(1-\frac1{1+\theta})y^2.
\]
Therefore, the CUSUM procedure looks like
\[
\widetilde{T} \!=\! \inf\{t \!: \! \max_{k<t} \!\! \sum_{i=k+1}^t \! \left[\frac12(1\!-\! \frac1{1+\theta})(q^\intercal x_i)^2 \!-\! \frac{\log(1\!+\!\theta)}{2}\right] \! \geq \! b'\}
\]
Compare with the detection procedure in (\ref{procedure1}), by letting $b' = \frac12(1-\frac1{1+\theta})(b-\frac{\log(1+\theta)}{1-1/(1+\theta)})w$, we can have
\[
\mathbb{E}^1[T] \geq \mathbb{E}^1[\widetilde{T}].
\]
Since $\widetilde{T}$ is a CUSUM procedure, whose properties are well understood, we may obtain its EDD as follows(see details in \cite{siegmund2013sequential}):
\[
\mu_1=\int\log(\frac{f_1(y)}{f_0(y)})f_1(y)dy=-\frac12\log(1+\theta)+\frac{\theta}{2},
\]
\[
\mathbb{E}^1[\widetilde{T}]=\frac{e^{-b^\prime}+b^\prime-1}{-\frac12\log(1+\theta)+\frac{\theta}{2}}.
\]
\end{proof}


\noindent{\it Remark:} Another common strategy to compute the detection delay is using Kullback-Leibler(KL) divergence. KL divergence between two continuous distributions with probability density function $p(x)$ and $q(x)$ is defined as
\[
D_{KL}\left( p(x) \| q(x) \right) \triangleq \int p(x) \log \frac{p(x)}{q(x)} dx,
\]
Therefore we can compute the KL divergence between $\mathcal{N}(0, I_p)$ and $\mathcal{N}(0,  I_p + \theta U U^\top)$:
\[
\begin{aligned}
&D_{KL}\left( \mathcal{N}(0, I_p) \| \mathcal{N}(0,  I_p + \theta U U^\top) \right) \\
&=\frac12\left[\log\frac{\|I_p + \theta U U^\top\|}{\|Ip\|}-p+\mbox{tr}((I_p + \theta U U^\top)^{-1}I_p)\right]\\
&=\frac12\left\{\log(1+\theta)-p+\mbox{tr}\left[(I_p + \theta U U^\top)^{-1}\right]\right\}\\
&=\frac12\left[\log(1+\theta)-p+\mbox{tr}\left(I_p - \frac{U U^\top\theta}{1+\theta}\right)\right]\\
&=\frac12\left[\log(1+\theta)-p+p-\frac{\theta}{1+\theta}\right]\\
&= \frac12 \left[ \log(1+\theta) - \frac{\theta}{1+\theta} \right]
\end{aligned}
\]
here in the third equality using Sherman-Morrison formula \cite{akgun2001fast}. 
The detection delay is greater than \[\frac{2\log(\text{ARL})}{\log(1+\theta)-\theta/(1+\theta)}.\]


\end{document}